\newtheorem{thm}{Theorem}[section]
\newtheorem{lem}[thm]{Lemma}
\theoremstyle{definition}
\theoremstyle{remark}
\numberwithin{equation}{section}
\newcommand{\R}{\mathbb R}
\newcommand{\p}{\partial}
\newcommand{\comment}[1]{}
\begin{document}

\title[Boundary Harnack Principle]{On the Boundary Harnack Principle in H\"older domains}
\author{D. De Silva}
\address{Department of Mathematics, Barnard College, Columbia University, New York, NY 10027}
\email{\tt  desilva@math.columbia.edu}
\author{O. Savin}
\address{Department of Mathematics, Columbia University, New York, NY 10027}\email{\tt  savin@math.columbia.edu}
\begin{abstract}We investigate the Boundary Harnack Principle in H\"older domains of exponent $\alpha>0$ by the analytical method developed in \cite{DS}.
 \end{abstract}

\maketitle

\section{Introduction}

In this paper we continue the study of the boundary Harnack principle for solutions to elliptic equations, based on the method developed in \cite{DS}. 
The classical boundary Harnack principle 
states that two positive harmonic functions that vanish on a portion of the boundary of a Lipschitz domain must be comparable up to a multiplicative constant, see for example \cite{A,D,K,W}. 
Further extensions to more general operators and more general domains were obtained in several subsequent works \cite{Kr,CFMS,JK,BB1, BBB, KiS}.

In particular, Bass and Burdzy \cite{BB1,BB2} and Banuelos, Bass and Burdzy \cite{BBB} provided sharp versions using probabilistic methods. They established the boundary Harnack principle for nondivergence elliptic operators in H\"older domains (or more general twisted H\"older domains) of exponent $\alpha > \frac 12$, and for divergence operators in H\"older domains of arbitrary exponent $\alpha >0$. For the case of divergence operators, an analytical proof based on Green's function was given by Ferrari in \cite{F}. 


To state precisely the boundary Harnack principle in H\"older domains, first we introduce some notation. Let $g: \overline {B}_1' \to \R$ be a $C^\alpha$ H\"older function of $n-1$ variables with $g(0)=0$, and $\alpha \in (0,1)$. Denote by $\Gamma \subset \R^n$ the graph of $g$,
$$\Gamma:=\{x_n=g(x') \}, \quad \quad 0 \in \Gamma,$$
and by $\mathcal C_r$ the cylinder above $B_r'$ and at height $r$ on top of $\Gamma$ 
 $$\mathcal C_r:=\{ x' \in B'_r , \quad g(x')<x_n < g(x')+r\}.$$
We say that $\mathcal C_1$ is a $C^\alpha$-Holder domain in a neighborhood of $\Gamma$.

The following version of the boundary Harnack principle is due to Banuelos, Bass and Burdzy, see \cite{BBB}.
\begin{thm}\label{T1}
Let $Lu=div (A(x) \nabla u)$ be a uniformly elliptic linear operator, and assume that $u, v$ are two positive solutions to 
$$Lu=Lv=0, \quad \mbox{in $\mathcal C_1$,} $$ 
which vanish on $\Gamma$. Then
$$ \frac{u}{v} \ (x) \le C \, \,  \frac u v \left (\frac 12 e_n \right) \quad \mbox{in $\mathcal C_{1/2}$},$$
with $C$ depending on $n$, $\alpha$, $\|g\|_{C^\alpha}$ and the ellipticity constants of $L$. 
\end{thm}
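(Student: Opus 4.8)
The plan is to reduce the two-sided statement to a one-sided estimate and then run the iterative scheme of \cite{DS}. Since $\tfrac12 e_n$ lies at a distance from $\Gamma$ bounded below in terms of $\alpha$ and $\|g\|_{C^\alpha}$, the interior Harnack inequality gives $u(\tfrac12 e_n),v(\tfrac12 e_n)\in(0,\infty)$; dividing, we may assume $u(\tfrac12 e_n)=v(\tfrac12 e_n)=1$, so the claim becomes $u\le Cv$ in $\mathcal C_{1/2}$, and the reverse inequality follows by interchanging $u$ and $v$. A routine approximation (mollify $A$, replace $\Gamma$ by nearby smooth interior graphs) lets us assume in addition that $q:=u/v$ is continuous up to $\Gamma$, so that every quantity below is finite; the final constants will depend only on $n$, $\alpha$, $\|g\|_{C^\alpha}$ and the ellipticity of $L$. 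Equivalently, the goal is a modulus of continuity for $q$ on $\Gamma\cap\overline{B}'_{1/2}$ depending only on the data, of which the stated inequality is an immediate consequence.

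The analytic input is a Carleson-type estimate: any $w>0$ with $Lw=0$ in $\mathcal C_1$, $w=0$ on $\Gamma$ and $w(\tfrac12 e_n)=1$ satisfies $w\le C_0$ in $\mathcal C_{3/4}$. I would prove it by covering $\Gamma\cap\overline{B}'_{3/4}$ with cylinders of a fixed small radius $\rho=\rho(\alpha,\|g\|_{C^\alpha})$, bounding $w$ on each by comparison with a supersolution barrier adapted to the Hölder geometry, and chaining by the interior Harnack inequality to $\tfrac12 e_n$ along paths of length controlled by the data. Applied to $w=v$ and $w=u$ this gives $u,v\le C_0$ in $\mathcal C_{3/4}$, and interior Harnack gives $v\ge c(d)>0$ on $\{x\in\mathcal C_{3/4}:\operatorname{dist}(x,\Gamma)\ge d\}$; in particular $q$ is controlled away from $\Gamma$, so the whole difficulty is its behaviour as $x\to\Gamma$.

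The heart of the matter is an improvement-of-oscillation estimate for $q$ at the boundary. Writing, for $\xi=(\xi',g(\xi'))\in\Gamma\cap\overline{B}'_{1/2}$, $\mathcal C_r(\xi)=\{x'\in B'_r(\xi'),\ g(x')<x_n<g(x')+r\}$ and $M_r(\xi)=\sup_{\mathcal C_r(\xi)}q$, $m_r(\xi)=\inf_{\mathcal C_r(\xi)}q$, I want constants $\rho_0,\theta\in(0,1)$ depending only on the data so that, for all small $r$,
\[
M_{\rho_0 r}(\xi)-m_{\rho_0 r}(\xi)\ \le\ (1-\theta)\bigl(M_{r}(\xi)-m_{r}(\xi)\bigr)\ +\ C\,r^{\alpha}.
\]
Following the method of \cite{DS}, this would be obtained by constructing, at scale $r$, comparison functions built from $v$ and from an auxiliary barrier that pin $q$ down more tightly on $\mathcal C_{\rho_0 r}(\xi)$, together with the interior Harnack inequality; the error term $Cr^{\alpha}$ arises because these barriers live over the curved $C^\alpha$ graph rather than over a half-space, and $\operatorname{osc}_{B'_r(\xi')}g\le\|g\|_{C^\alpha}r^{\alpha}$. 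Iterating over $r_k=\rho_0^{\,k}$, the corrections sum to $C\sum_k\rho_0^{\,k\alpha}<\infty$, and the a priori finiteness of $q$ is upgraded to a genuine modulus of continuity for $q$ at each $\xi\in\Gamma\cap\overline{B}'_{1/2}$, with data-dependent constants; combined with the interior control of $q$ this yields $q\le C$ in $\mathcal C_{1/2}$, that is, $u\le Cv$.

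The step that genuinely goes beyond its Lipschitz analogue in \cite{DS} is the displayed inequality, and the underlying obstruction is the \emph{lack of scale invariance}: dilating the graph of $g$ by a factor $r$ multiplies its $C^\alpha$ seminorm by $r^{\alpha-1}\to\infty$ as $r\to0$, so the rescaled domains leave any fixed class — at scale $r$ the cylinder $\mathcal C_r(\xi)$ is a thin slab of vertical thickness $r$ lying over a graph whose amplitude $\|g\|_{C^\alpha}r^{\alpha}$ exceeds that thickness by the factor $r^{\alpha-1}$. This mismatch is precisely what forces the error term $Cr^{\alpha}$ instead of a clean self-improving estimate, and the reason the scheme nevertheless closes is that this error, summed geometrically, converges \emph{for every} $\alpha>0$. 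This is the structural explanation of why the divergence-form boundary Harnack principle persists in $C^\alpha$ domains of arbitrarily small exponent, in contrast with the nondivergence setting, where the analogous pointwise errors are controlled only for $\alpha>\tfrac12$; carrying out the comparison against this non-flat, non-self-similar geometry while keeping the accumulated error summable is where the main work of the proof lies.
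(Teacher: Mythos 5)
Your high-level plan (normalize at $\tfrac12 e_n$, reduce to the one-sided bound $u\le Cv$, establish a Carleson estimate, then iterate at the boundary) does match the outline of the paper's Steps~1--3. But the central technical claim you put at the heart of the iteration — a \emph{fixed-ratio} improvement of oscillation
$M_{\rho_0 r}-m_{\rho_0 r}\le(1-\theta)(M_r-m_r)+Cr^\alpha$
with $\rho_0,\theta$ uniform in $r$, and convergence because $\sum_k\rho_0^{k\alpha}<\infty$ — is not what the paper proves, and I do not see how it could hold as stated. You yourself identify the obstruction: rescaling $\mathcal C_r(\xi)$ by $1/r$ blows the $C^\alpha$ seminorm up like $r^{\alpha-1}$, so the constant $\theta$ in any such estimate obtained by comparison at scale $r$ would degenerate as $r\to0$; there is no fixed $\rho_0,\theta$ to iterate. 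The summability of $\sum r_k^\alpha$ is a red herring — it holds for every $\alpha>0$ and would therefore equally ``explain'' the nondivergence case, which in fact fails for $\alpha<\tfrac12$ (Bass--Burdzy) and is recovered in this paper for all $\alpha>0$ only under the extra structural hypothesis that $A=A(x')$. The correct explanation of the $\alpha>0$ threshold in divergence form is different and is exactly what your sketch is missing.

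What the paper actually does is make the inner scale \emph{depend on} $r$: it works in cubes of side $r^{\beta/\alpha}$ (for a free parameter $\beta>1$), stacked along vertical lines from $\Gamma$. The engine is the pair of capacity lemmas (Lemmas~\ref{l1} and~\ref{l2}): in cubes where the complement $\{x_n\le g\}$ has small capacity, a Harnack-in-measure estimate lets the positivity information propagate; in cubes where it has nonnegligible capacity, the weak Harnack inequality for the negative part $w^-$ applies because $w^-$ vanishes on a set of positive capacity. Chaining these through $\sim r^{1-\beta/\alpha}$ layers yields a superexponential gain $e^{-c_0 r^{1-\beta/\alpha}}$ on $w^-$, while the Harnack chain connecting heights $r^\beta$ and $r$ costs a superexponential factor $e^{C_1 r^{\gamma}}$ with $\gamma=\beta(1-1/\alpha)<0$. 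The scheme closes precisely because choosing $\beta>1$ makes $\gamma>1-\beta/\alpha$, so the gain beats the loss for $r$ small; this works for \emph{every} $\alpha\in(0,1)$ and is the real reason the divergence-form result holds down to $\alpha>0$. Finally, the paper does not track the oscillation of $q=u/v$ at all: it applies the iteration of Step~1 directly to the linear combination $w=C_3u-C_3^{-1}v$ (sign-changing, vanishing on $\Gamma$, bounded below by $-1$, large at height $r_0^\beta$), and concludes $w\ge0$, which gives $u/v\le C_3^2$ without ever needing $q$ to be continuous or even a priori finite up to $\Gamma$. Your proposal is missing the capacity lemmas, the $r$-dependent inner scale $r^{\beta/\alpha}$, and the exponential-vs.-exponential balance that is the actual reason the argument survives to arbitrary $\alpha>0$.
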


The assumption that $u=v=0$ in $\Gamma$ is understood in the $H^1$ sense, i.e. $u, v \in H_{0}^1(\mathcal C_1)$ in a neighborhood of $\Gamma$. 

Recently, in \cite{DS} we found a direct analytical method of proof of the boundary Harnack principle based on an iteration scheme and Harnack inequality. In particular we established the corresponding results in H\"older domains of exponent $\alpha > \frac 12$ for general equations either in divergence or nondivergence form.

In the present paper we discuss further the case of H\"older domains of arbitrary exponent $\alpha >0$, and give a proof of Theorem \ref{T1} using the same ideas from \cite{DS}. We also consider some extensions of Theorem \ref{T1} to non-divergence equations whose coefficients remain constant in the vertical direction. 

The paper is self-contained and is organized as follows. In Section 2 we give two lemmas concerning Harnack inequality outside domains of small capacity. In Section 3 we use these lemmas and employ the arguments from \cite{DS} to prove Theorem \ref{T1}. Finally in Section 4 we provide some extensions of Theorem \ref{T1} to more general divergence operators, and certain non-divergence or fully nonlinear operators.

\section{Two lemmas}

In this section we present two lemmas concerning solutions to divergence equations in domains whose complement in the unit cube $Q_1$ has small capacity.  

Given a domain $\Omega$ and a compact set $K \subset \Omega$, we say that two functions $u,v \in H^1(\Omega)$ agree on $K$, and write $u=v$ in $K,$ if $u-v \in H_{0,loc}^1(K^c)$. Here $K^c$ denotes the complement of $K$ in $\R^n.$

In particular, if $L$ is a uniformly elliptic operator in divergence form $$Lu= div (A(x) \nabla u),$$
with $$\mbox{$A(x)$ measurable,} \quad \quad \Lambda \,  |\xi|^2 \ge \xi^T \, A(x) \, \xi \ge \lambda \, |\xi|^2, \quad \quad \lambda>0,$$
then $u$ solves 
\begin{equation}\label{Lu}
Lu=0 \quad \mbox{in $\Omega \setminus K$,} \quad \mbox{and $u=0$ on $\p \Omega$, $u=1$ in $K$,} 
\end{equation}
means that $Lu=0$ in the open set $\Omega \setminus K$, and  
$$u-\eta \in H_0^{1} \left(\Omega \setminus K\right),$$
where $\eta \in C_0^\infty(\Omega),$ and $\eta=1$ in a neighborhood of $K$. 
 
 Notice that the solution $u$ to \eqref{Lu} is a supersolution in $\Omega$, i.e. $Lu \le 0$ in $\Omega$.

Let $Q_1$ denote the unit cube in $\R^n$ centered at $0$, and $E \subset Q_1$ a closed set. 
Set, 
$$cap_{3/4}(E):=cap_{Q_1}(E \cap Q_{3/4})=  \inf_{w \in \mathcal A}\int_{Q_1} |\nabla w|^2 dx,$$
where
$$\mathcal A:=\{w \in H_0^1(Q_1),  \quad w=1 \quad \mbox{in $E\cap \overline Q_{3/4}$} \}.$$ 

The first lemma states that a solution to $Lv=0$ in $Q_1 \setminus E$ satisfies the Harnack inequality in measure if $E$ has small capacity. Positive 
constants depending on the dimension $n$ and the ellipticity constants $\lambda$, $\Lambda$ are called universal. 

\begin{lem}\label{l1}
Assume $v \ge 0$ is defined in $Q_1 \setminus E$ and satisfies
$$ Lv =0.$$Let $$Q^i:=Q_{1/8}(x_i) \subset Q_{1/2}, \quad i=1,2$$ be two cubes of size $1/8$ included in $Q_{1/2}$. Assume that 
$$cap_{3/4}(E) \le \delta \quad \mbox{and} \quad \frac{|\{v \ge 1\} \cap Q^1|}{|Q^1|} \ge 1/2,$$
for some $\delta$ small, universal. 
Then $$\frac{|\{v \ge c_0\} \cap Q^2|}{|Q^2|} \ge 1/2$$
for some $c_0$ small.
\end{lem}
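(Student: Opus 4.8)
The plan is to reduce the statement to the classical weak Harnack inequality for nonnegative supersolutions on the full cube; the smallness of $cap_{3/4}(E)$ will be used only to manufacture, out of $v$, a genuine \emph{global} supersolution which coincides with $v$ outside a set of small measure.

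First I would fix a capacitary potential: a function $w$ on $Q_{3/4}$ with $0\le w\le 1$, $w=1$ quasi-everywhere on $E\cap\overline Q_{3/4}$, $Lw=0$ in $Q_{3/4}\setminus E$, $w\in H^1_0(Q_{3/4})$ (the $L$-capacitary potential of $E$), and
\[
\int_{Q_{3/4}}|\nabla w|^2\,dx\ \le\ C\,cap_{3/4}(E)\ \le\ C\delta .
\]
Sobolev's and Poincaré's inequalities then give $\int_{Q_{3/4}}w^2\le C\delta$, so that $|\{w\ge t\}\cap Q_{3/4}|\le C\delta\,t^{-2}$ for every $t>0$; in particular $|E\cap\overline Q_{3/4}|\le C\delta$. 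Next, set
\[
\Phi:=\min(v+w,\,1)\qquad\text{in }Q_{3/4}\setminus E .
\]
Since $v+w$ solves $L(v+w)=0$ there and $1$ is a solution, $\Phi$ is a supersolution in $Q_{3/4}\setminus E$ with $0\le\Phi\le1$, and $\Phi=1$ quasi-everywhere on $E\cap\overline Q_{3/4}$ (where $w=1$ forces $v+w\ge1$). A function which is a supersolution off $E$, lies below the constant $1$, equals $1$ quasi-everywhere on $E$ and belongs to $H^1(Q_{3/4})$ must in fact be a supersolution across $E$ as well: along $E$ it can only ``bend downward'', which is compatible with the supersolution property. Making this precise — and checking that $\Phi\in H^1(Q_{3/4})$ even though $v$ is a priori controlled only inside the open set $Q_{3/4}\setminus E$ — is, I expect, the crux of the proof; it is here that the smallness of the capacity is essential, either by approximating $E$ from outside by open sets of comparable capacity, or equivalently by carrying out all the underlying energy/Caccioppoli estimates with cutoffs of the form $\zeta\,(1-w)$, which vanish on $E$ at the universally small cost $\int|\nabla w|^2\le C\delta$. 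Granting this, $\Phi$ is a genuine nonnegative supersolution in $Q_{3/4}$.

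Finally I would apply the classical weak Harnack inequality to $\Phi$ in $Q_{3/4}$. Since $\{v\ge1\}\cap(Q^1\setminus E)\subset\{\Phi=1\}$, for $\delta$ small we get $|\{\Phi\ge1\}\cap Q^1|\ge\frac12|Q^1|-C\delta\ge\frac13|Q^1|$, hence
\[
\inf_{Q_{1/2}}\Phi\ \ge\ c\,\Big(\frac{1}{|Q_{3/4}|}\int_{Q^1}\Phi^{p_0}\,dx\Big)^{1/p_0}\ \ge\ 2c_0
\]
for universal $p_0,c,c_0>0$. On $Q_{1/2}\setminus E$ one has $v\ge\Phi-w\ge 2c_0-w$, so $v\ge c_0$ on the set $Q_{1/2}\cap\{w\le c_0\}$, whose complement inside $Q^2$ has measure at most $|E\cap Q^2|+|\{w>c_0\}\cap Q^2|\le C\delta\,c_0^{-2}$. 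Shrinking $\delta$ once more (legitimate, since $c_0$ is already fixed and $|Q^2|$ is universal) we obtain $|\{v\ge c_0\}\cap Q^2|\ge|Q^2|-C\delta\,c_0^{-2}\ge\frac12|Q^2|$, which is the claim.
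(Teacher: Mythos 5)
Your strategy is genuinely different from the paper's, and the two do share the same two ingredients (weak Harnack on a supersolution that is $\ge 1$ on a good fraction of $Q^1$, plus a Poincar\'e/Chebyshev bound on the capacitary potential to control the exceptional set in $Q^2$). Where you differ is in \emph{where} the weak Harnack inequality is applied: you try to manufacture a single global supersolution $\Phi=\min(v+w,1)$ on all of $Q_{3/4}$ and then invoke weak Harnack directly, whereas the paper never crosses $E$ at all. It introduces two auxiliary functions, $\psi$ (solution with $\psi=1$ on a compact $K\subset\{v\ge1\}\cap Q^1$ of density $\ge 1/4$, so that weak Harnack gives $\psi\ge c_0$ in $Q_{1/2}$) and $\phi$ (the capacitary potential of $E$, your $w$), and then uses the maximum principle in the \emph{open} set $Q_{3/4}\setminus E$ to conclude $2v\ge\psi-\phi$. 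The point is that $\psi$ and $\phi$ are globally well-behaved $H^1_0$ objects, while the only facts about $v$ used are $v\ge0$, $Lv=0$ in the open set, and continuity near the compact $K$; no $H^1$-information on $v$ near $E$ is ever required.

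This is exactly where the gap in your argument lies, and you flagged it yourself. The claim that $\Phi\in H^1(Q_{3/4})$ and is a supersolution across $E$ is not established, and it is not clear it can be established from the stated hypotheses. The lemma only assumes that $v\ge0$ solves $Lv=0$ in the open set $Q_1\setminus E$: there is no control whatsoever on the behavior of $v$ (or its Dirichlet energy) as one approaches $E$. The Caccioppoli inequality with cutoffs $\zeta=\zeta_0(1-w)$, which you propose, yields only the \emph{weighted} bound $\int \zeta_0^2(1-w)^2|\nabla\Phi|^2\le C$, and since $1-w$ vanishes (q.e.) precisely on $E$, this does not give $\nabla\Phi\in L^2$ near $E$. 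Without $\Phi\in H^1_{loc}(Q_{3/4})$, the removability-type argument (testing with $\min(\varphi,k(1-\Phi))$ and letting $k\to\infty$, which would show that $\Phi$ is a supersolution across $E$) cannot even be set up, and the classical weak Harnack inequality for supersolutions is not applicable. So the present write-up leaves a genuine hole at the step you yourself called the crux. By contrast, the paper's comparison $2v\ge\psi-\phi$ sidesteps the issue entirely because the inequality is proved by the maximum principle inside $Q_{3/4}\setminus E$, using that $(\psi-\phi)^+\in H^1_0(Q_{3/4}\setminus E)$ and that $2v\ge 1\ge\psi-\phi$ near $K$; the conclusion on $Q^2$ then follows by combining $\psi\ge c_0$ (Step \eqref{A1}) with the measure estimate for $\{\phi> c_0/4\}$ (Step \eqref{A2}), just as in your last paragraph.
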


The second lemma is standard and states that the weak Harnack inequality holds for a subsolution $v\ge 0$ which vanishes on a set $E$ of positive capacity.
\begin{lem}\label{l2}
Assume that $v \ge 0$ in $Q_1$, and 
$$\mbox{$Lv \ge 0$ in $Q_1$, $v=0$ in $E \cap \overline Q_{3/4}$.}$$ If
$$ cap_{3/4}(E) \ge \delta,$$ then
$$v(0) \le (1-c(\delta)) \|v\|_{L^\infty}.$$

\end{lem}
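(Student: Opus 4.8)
The plan is to derive the conclusion from a standard weak Harnack inequality together with a capacitary lower bound on the measure of the zero set of $v$ near $0$. The key point is that $v=0$ on $E\cap\overline Q_{3/4}$ and $cap_{3/4}(E)\ge\delta$ force the set $\{v \le \tfrac12\|v\|_{L^\infty}\}$ to occupy a definite proportion of a fixed cube, and then the weak Harnack inequality for the supersolution $w:=\|v\|_{L^\infty}-v\ge 0$ propagates this to a pointwise bound at the origin.

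First I would normalize $\|v\|_{L^\infty}=1$, so $w:=1-v$ is a nonnegative supersolution ($Lw\le 0$) in $Q_1$ with $w=1$ on $E\cap\overline Q_{3/4}$. The capacity hypothesis $cap_{3/4}(E)\ge\delta$ gives that the competitor $w$ itself (extended suitably, or rather $\min(w,1)$ cut off to lie in $H_0^1(Q_1)$) has Dirichlet energy $\int_{Q_1}|\nabla w|^2\ge c\,\delta$ by definition of capacity as an infimum — more precisely, one compares with the capacitary potential and uses that a supersolution equal to $1$ on $E$ dominates, in an energy sense, the capacitary potential of $E$. From a lower bound on the energy of $w$, a Poincaré/Sobolev inequality yields $\int_{Q_{3/4}} w^2 \ge c_1(\delta)>0$, hence $|\{w\ge c_2(\delta)\}\cap Q_{3/4}|\ge c_3(\delta)$ for suitable constants; equivalently $v$ is bounded away from $1$ on a set of definite measure.

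Next I would apply the weak Harnack inequality to the nonnegative supersolution $w$ in, say, $Q_{3/4}$: it controls $\inf_{Q_{1/2}} w$ (in particular $w(0)$) from below by an $L^p$ average of $w$ over $Q_{3/4}$ for some small universal $p>0$. Since $\int_{Q_{3/4}} w^p \ge c_2(\delta)^p\, c_3(\delta)>0$ from the previous step, this gives $w(0)\ge c(\delta)$ for some $c(\delta)>0$, i.e. $v(0)\le 1-c(\delta)$, which is the claim after undoing the normalization.

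The main obstacle I expect is the first step: turning the \emph{capacity} lower bound $cap_{3/4}(E)\ge\delta$ into a \emph{measure} lower bound on a sublevel set of $v$ — i.e.\ justifying that a supersolution pinned to $1$ on $E$ must carry at least the capacitary energy, and converting that energy bound into an $L^2$ (or measure) bound via Sobolev–Poincaré. This is essentially classical potential theory (the supersolution dominates the capacitary potential, whose energy is exactly $cap_{3/4}(E)$), but the measurability of $A$ means one must argue purely variationally rather than via smooth barriers; all constants produced this way depend only on $\delta$ and universal quantities, as required.
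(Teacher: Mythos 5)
Your plan begins exactly as the paper does: normalize, set $w:=1-v$, and observe that $w\ge\phi$ where $\phi$ is the capacitary potential of $E\cap\overline Q_{3/4}$, by the maximum principle. The gap is in how you want to turn $cap_{3/4}(E)\ge\delta$ into a pointwise lower bound at the origin. You propose to convert a Dirichlet-energy lower bound on (a cutoff of) $w$ into an $L^2$ lower bound ``via a Poincar\'e/Sobolev inequality,'' and thence into a measure bound feeding the weak Harnack inequality. That conversion goes in the wrong direction: Poincar\'e and Sobolev give \emph{upper} bounds on $L^2$ (or $L^{2^*}$) norms by the Dirichlet energy, never lower bounds, and a large energy does not force a large $L^2$ norm when the energy concentrates near a thin set. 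Indeed, taking $L=\triangle$, $n\ge 3$, and $E$ a sphere of radius $\varepsilon$ inside $Q_{3/4}$, one has $cap(E)\sim\varepsilon^{n-2}$ while $\int\phi^2\sim\varepsilon^{n}$ (and $\sim\varepsilon^2$ when $n=3$), so any lower bound $\int\phi^2\ge c_1(\delta)$ is necessarily of strictly higher order than $\delta$; it cannot come from Poincar\'e/Sobolev, which would produce a linear bound. This is not a technical nuisance about measurable coefficients — it is the heart of the lemma, and your sketch does not supply it.

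The paper's proof avoids the measure/weak Harnack detour entirely. It argues by contradiction: if $\phi\le\mu$ at one point of $\partial Q_{7/8}$, then interior Harnack makes $\phi\le C\mu$ on all of $\partial Q_{7/8}$; the maximum principle in the annulus $Q_1\setminus Q_{7/8}$ then gives $\phi\le C\mu$ there; Caccioppoli yields $\|\nabla\phi\|_{L^2(Q_1\setminus Q_{15/16})}\le C\mu$; and testing the equation with $\eta^2(1-\phi)$ propagates this smallness to the whole Dirichlet energy of $\phi$, contradicting $cap_{3/4}(E)\ge\delta$ once $\mu$ is small compared with $\delta$. Having $\phi\ge c(\delta)$ on $\partial Q_{7/8}$, the minimum principle for the supersolution $\phi$ gives $\phi(0)\ge c(\delta)$ directly. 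If you tried to salvage your route, you would essentially have to redo this Caccioppoli/energy argument to produce the $L^p$ lower bound on $\phi$ you need, at which point invoking weak Harnack is redundant — the maximum principle already finishes the proof.
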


\begin{proof}[Proof of Lemma \ref{l1}] Let $\psi$ be the solution to 
$$\mbox{$L \psi=0$ in $Q_{3/4} \setminus K$, \quad \quad $\psi =1$ in $K$, \quad $\psi=0$ on $\p Q_{3/4}$,}$$
where $K$ is a compact subset of $\{v \ge 1\} \cap (Q^1 \setminus E)$ with $|K| \ge \frac 14 |Q^1|$ (see Figure \ref{fig1}).
By hypothesis and weak Harnack inequality we find 
\begin{equation}\label{A1}
\psi \ge c_0 \quad \mbox{ in} \quad  Q_{1/2},
\end{equation} 
for some small $c_0$.

\begin{figure}[h] 
\includegraphics[width=0.6 \textwidth]{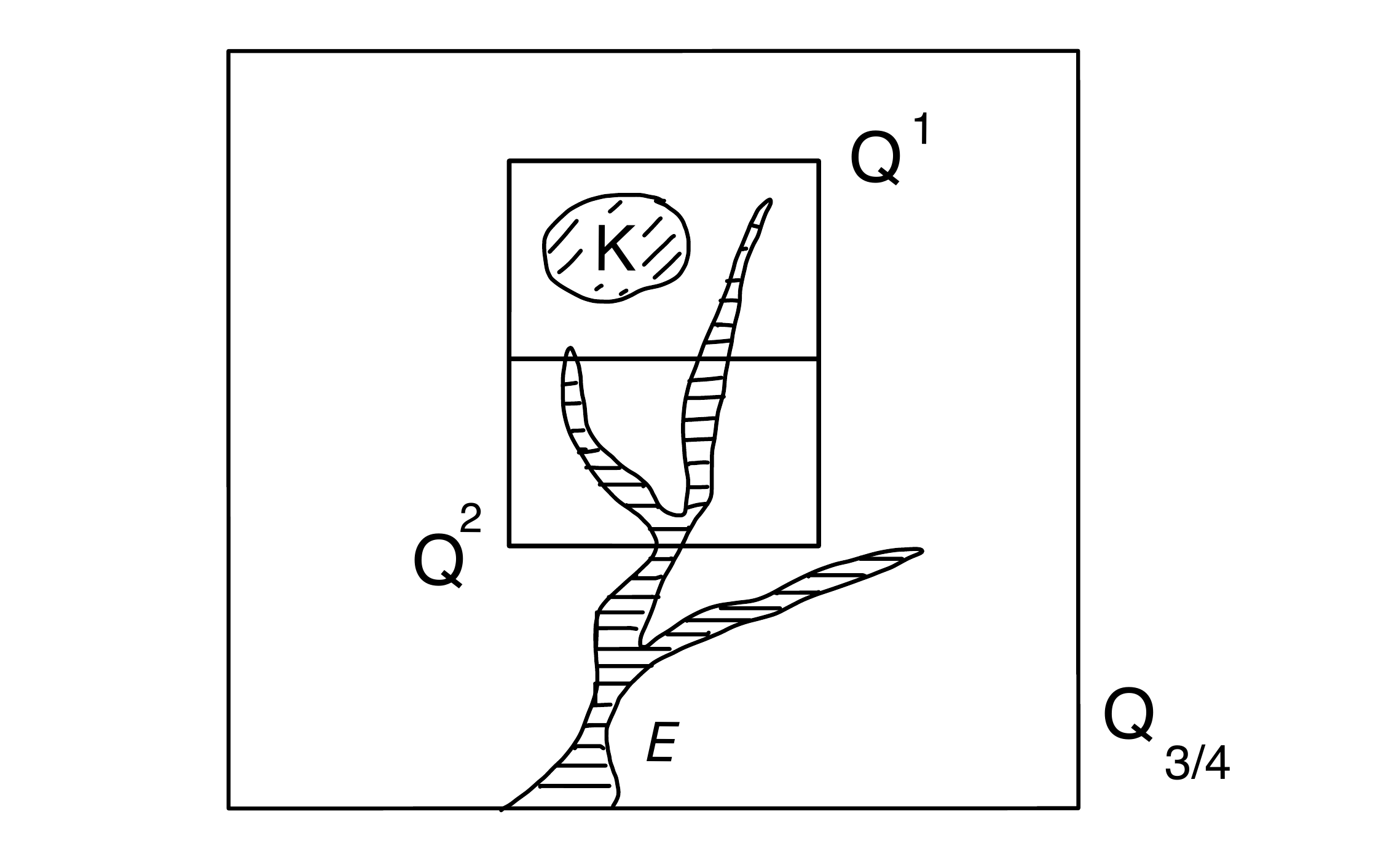}
\caption{ }
   \label{fig1}
\end{figure}
Similarly as above we define $\phi$ to be the solution to
\begin{equation}\label{A1.5}
\mbox{$L \phi=0$ in $Q_1 \setminus (E \cap Q_{3/4}) $,  \quad \quad $\phi =1$ in $E \cap Q_{3/4}$, $\phi=0$ on $\p Q_1$.}
\end{equation}
We claim that if $\delta$ is chosen sufficiently small then,
\begin{equation}\label{A2}
|\{\phi > \frac 14 c_0  \} \cap Q^2| \le \frac 12 |Q^2|.
\end{equation} 
For this we let $w$ be the solution to \eqref{A1.5} when $L=\triangle$. The Dirichlet energies of $\phi$ and $w$ are comparable since
$$\int (\nabla (\phi-w))^T A \nabla \phi \, dx =0 ,$$
hence 
$$cap_{3/4} (E) \le \int |\nabla \phi |^2 \,dx \le C \int (\nabla \phi)^T A \nabla \phi dx \le C \int |\nabla w |^2 \,dx= C cap_{3/4} (E).$$
By Poincar\'e inequality we find
$$\int \phi^2 \,dx \le C \int |\nabla \phi |^2 \,dx\le C \delta,$$ which gives the claim \eqref{A2}.

Next we compare $2v$ with $\psi-\phi$ in $Q_{3/4} \setminus E$. 

They satisfy the same equation in $Q_{3/4} \setminus (E \cup K)$, and in a neighborhood of $K$ by the continuity of $v$ we have
$$2v \ge 1 \ge  \psi-\phi.$$
On the other hand $$\psi-\phi \le 0 \quad \mbox{on} \quad \p (Q_{3/4} \setminus E)$$ in the sense that $(\psi -\phi)^+ \in H_0^1 (Q_{3/4} \setminus E)$. Since $v \ge 0$, the maximum principle gives
$$2v \ge \psi -\phi,$$
which by \eqref{A1},\eqref{A2} yields the desired conclusion.

\end{proof}

\begin{proof}[Proof of Lemma \ref{l2}]
Assume that $\|v\|_{L^\infty}=1$. Then, by the maximum principle we have $$1-v \ge \phi,$$ with $\phi$ as in \eqref{A1.5} above. It suffices to show that $\phi \ge c(\delta)$ on $\p Q_{7/8}$ which by the maximum principle implies the desired conclusion $\phi(0) \ge c(\delta)$ small. Since all the values of $\phi$ are comparable near $\p Q_{7/8}$ by the Harnack inequality, we need to show that $\phi \ge c'(\delta)$ at some point on $\p Q_{7/8}$.

Assume by contradiction that $|\phi| \le \mu$ is very close to $0$ on $\p Q_{7/8}$. The Caccioppoli inequality (we think that $\phi$ is extended to $0$ outside $Q_1$) implies
\begin{equation}\label{A3}
 \| \nabla \phi\|_{L^2 (Q_1 \setminus Q_{15/16})} \le C \|\phi \|_{L^2(Q_1\setminus Q_{7/8})} \le C \mu.
 \end{equation}
On the other hand if $\eta \in C_0^\infty(Q_1)$ with $\eta=1$ in $Q_{15/16}$ then 
$$\int \nabla [\eta^2 (1-\phi)]  A \nabla \phi \,dx=0,$$
hence
$$\int \eta^2 \nabla \phi  A \nabla \phi \,dx \le C \int |\nabla \phi|^2 |\nabla \eta|^2 \,dx\le C \mu^2.$$
This together with \eqref{A3} implies that the Dirichlet energy of $\phi$ in $Q_1$ is bounded above by $C \mu^2$, and we reach a contradiction.  
\end{proof}

\section {Proof of Theorem \ref{T1}}

This section is devoted to the proof of Theorem \ref{T1}. We recall that $\Gamma$ denotes the graph of a $C^\alpha$ function $g$, with $\alpha \in (0,1)$, 
$$\Gamma:=\{x_n=g(x') \}, \quad \quad 0 \in \Gamma,$$
and $\mathcal C_r$ denotes the cylinders of size $r$ on top of $\Gamma$ 
 $$\mathcal C_r:=\{ x' \in B'_r , \quad g(x')<x_n < g(x')+r\}.$$
The main idea of the proof is to show through an iterative procedure that a solution $w$ which vanishes on $\Gamma$ and is mostly positive in $\mathcal C_r$, becomes 
positive near the origin.  

We denote by 
$$\mathcal A_{r}:=\left\{x \in B_r'| \quad g(x')+r^\beta \le x_n < g(x') + r \right \}, $$
the points in the cylinder $\mathcal C_r$ at height greater than $r^\beta$ on top of $\Gamma$, for some $\beta>1$.

We divide the proof in three steps.

\

\textit{Step 1.}  We show that,
there exist $C_0, \beta>1$ depending on $n$, $\alpha$, $\|g\|_{C^\alpha}$, and the ellipticity constants of $L$, such that if $w$ is a solution to $ L w=0$ in $\mathcal C_r$ (possibly changing sign) which vanishes on $\Gamma$,
$$w \geq f(r) \quad \quad \text{on} \quad \mathcal A_{r}, $$
and
$$w \geq -1 \quad \text{on $\mathcal C_r$},$$
where $$f(r):= e^{C_0r^\gamma}, \quad \gamma:=\beta(1-\frac{1}{\alpha}) <0,$$
then,
\begin{equation}\label{301}
w \geq f( \frac r 2) \, \, a \quad \text{on $\mathcal A_{\frac r 2}$},
\end{equation} and
\begin{equation}\label{302}
w \geq -a \quad \text{on $\mathcal C_{\frac r 2}$}, 
\end{equation}
for some small $a=a(r)>0$, as long as $r\leq r_0$ universal.

The conclusion can be iterated and we obtain that if the hypotheses are satisfied in $\mathcal C_{r_0}$ then 
$$w > 0 \quad \text{on the line segment $\{te_n, \quad 0<t<r_0\}.$}$$

Since $g$ is H\"older continuous, we can apply interior Harnack inequality to $w+1$ in a chain of balls and need $$C(r^\beta)^{1-\frac 1 \alpha} =C r^\gamma \quad \mbox{balls}$$ to connect a point in $\mathcal A_{r/2}$ with a point in $\mathcal A_r$. We conclude that
\begin{equation}\label{w1}w \geq (f(r)+1) e^{-C_1r^\gamma} -1 \quad \text{in $\mathcal A_{r/2}$},\end{equation}
for some $C_1$ universal, hence $w \ge 1$ in $\mathcal A_{r/2}$ if $C_0$ is sufficiently large.

Next we take a point on $\Gamma:=\{x_n=g(x')\}$, say $0$ for simplicity, 
and consider the cubes of size $r^{\beta/\alpha}$ centered on the $e_n$ axis, i.e $Q_{r^{\beta / \alpha}}(t e_n)$ (see Figure \ref{fig2}). 

When $t>Cr^\beta$ the cube is in the interior of the domain and when $t<-C r^\beta$ the cube is in the complement. 
There are $C r^\gamma$ stacked cubes which connect the domain with its complement. 
The graph property of the domain implies that the capacity of the complement 
$$E=\{x_n \le g(x')\}$$ in $Q_{r^{\beta / \alpha}}(t e_n)$ is decreasing with $t$. 
By continuity we can find a cube centered at $t_0 e_n$ such that, after a rescaling of factor $r^{-\beta/\alpha}$, $cap_{3/4}(E)=\delta$ in that cube, with $\delta$ as in Lemma \ref{l1}.  

\begin{figure}[h]
\includegraphics[width=0.7 \textwidth]{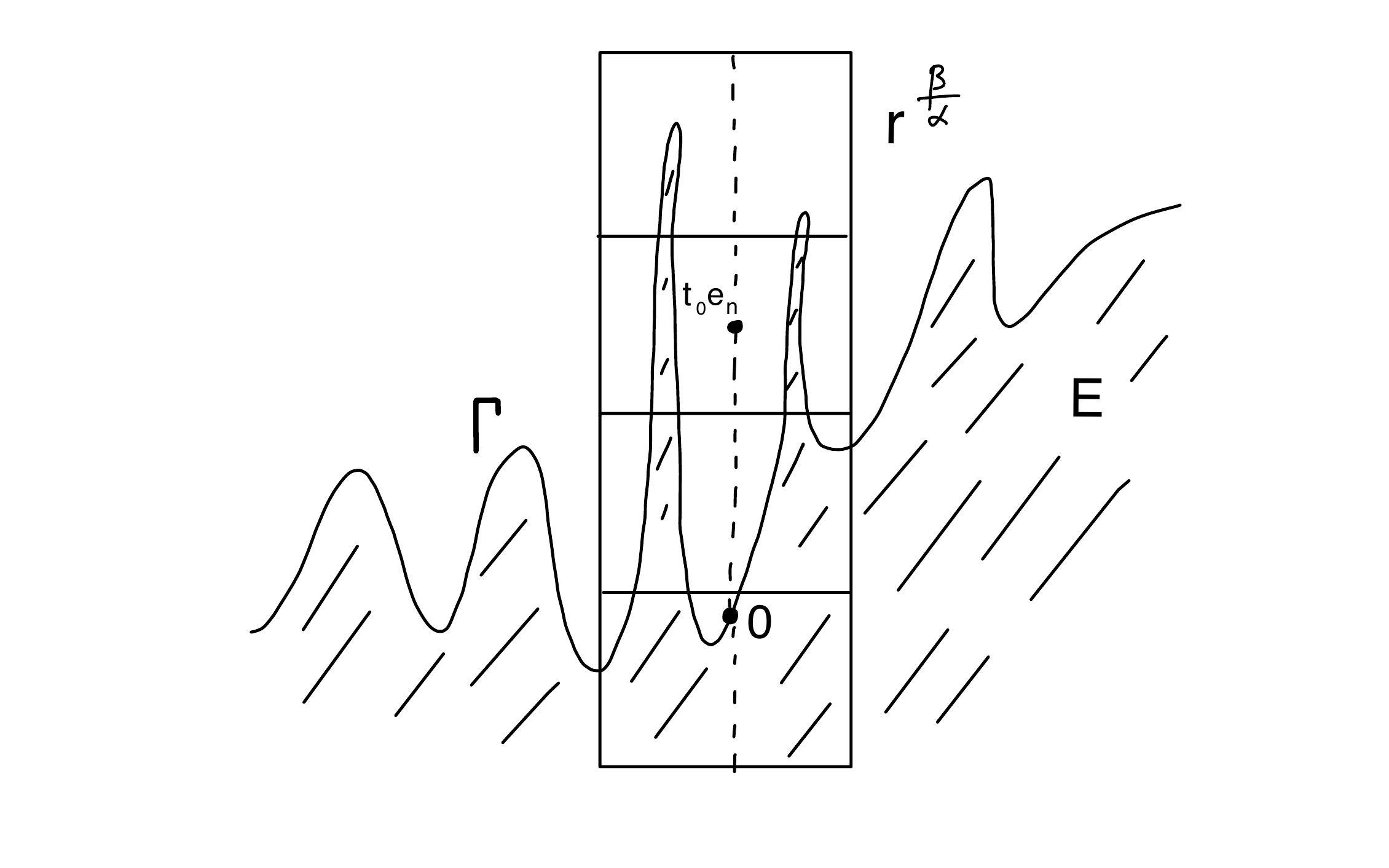}
\caption{}
    \label{fig2}
\end{figure}

For all cubes centered at $te_n$ with $t \ge t_0$ we can apply Lemma \ref{l1} repeatedly for $w+1$ and obtain an inequality in measure as in \eqref{w1},
$$|\{w \ge 1\} \cap Q^i(te_n)| \ge \frac 12 |Q^i|, \quad \quad Q^i(te_n):= Q_{\frac 18r^{\beta/\alpha}}(t e_n).$$
Thus $\{w^-=0\}$ has positive density in all cubes centered at $te_n$ with $t \ge t_0$.

Now we notice that we can apply weak Harnack inequality for $w^-$ in all cubes, in the top cubes with $t\ge t_0$ because of the density property, and in the bottom ones with $t \le t_0$ because of Lemma \ref{l2}. 

Hence $w^-$ decreased by a fixed factor on the $e_n$ axis passing through a point on $\Gamma$, with respect to its maximum over all cubes of size $r^{\beta/\alpha}$ centered on that axis. As we move each time a $r^{\beta/\alpha}$ distance inside the domain from the sides of $\mathcal C_r$, $\sup w^-$ decreases geometrically hence, 
$$w \geq - e^{-c_0 r^{1-\beta/\alpha}} \quad \text{in $\mathcal C_{r/2}$},$$
for $c_0$ small universal.
We choose 
$$a(r):=e^{-c_0 r^{1-\beta/\alpha}},$$
and in view of \eqref{w1}, our claim 
$$w \ge 1 \ge a(r) f(\frac r2),$$is satisfied for all $r$ small. 

\

\textit{Step 2.} [Carleson estimate] We show that,
$$u,v \leq C_2 \quad \text{in $\mathcal C_{1/2}$,}$$
with $C_2$ universal. We apply an iterative argument similar to the one in Step 1. 
Since $u(e_n/2)=1$, the interior Harnack inequality gives that
\begin{equation}\label{int}u \leq e^{\, C_1 h_\Gamma^{1-1/\alpha}} \quad \quad \text{in $\mathcal C_{3/4}$}, \quad \quad h_\Gamma(x):=x_n-g(x'),\end{equation}
 with $C_1$ universal.
With the same notation as Step 1, we wish to prove that if $r$ is smaller than a universal $r_0$ and
$$u(y) \geq f(r),$$ for some $y \in \mathcal C_{1/2}$,
then we can find $$z \in S:=\{|y'-z'|=r, \quad 0<h_{\Gamma}(z)< r^\beta\},$$ such that
$$u(z) \geq f(\frac{r}{2}).$$
Since $|z-y|\le C r^\alpha$, we see that for $r$ small enough, we can build a convergent sequence of points $y_k \in \mathcal C_{3/4}$ with $u(y_k)\ge f(2^{-k}r )\to \infty$. On the other hand the extension of $u$ by $0$ below $\Gamma$ is a subsolution in a neighborhood of $\Gamma$. Therefore $u$ is bounded above, and we reach a contradiction.

To show the existence of the point $z$ we let 
$$w:=(u- \frac 12 e^{C_0 r^\gamma})^+, \quad \quad \mbox{with} \quad C_0 \gg C_1.$$ 
By \eqref{int} we know that 
$$w=0 \quad \mbox{ when} \quad h_\Gamma(x) \ge r^\beta.$$ By Lemma \ref{l1} this estimate can be extended in measure for the cubes of size $r^{\beta/\alpha}$ with $t \ge t_0$ since the capacity of the complement is bounded above. More precisely, as in Step 1, in each cube of size $r^{\beta/\alpha}$ we have that either $\{w=0\}$ has positive density (for the cubes with $t \ge t_0$), or positive capacity (for the cubes with $t \le t_0$). 

Moreover, if our claim is not satisfied then we apply Weak Harnack inequality for $w$ repeatedly as in Step 1 above. As we move inside the domain from the sides of $\mathcal C_r(y',g(y'))$ we obtain
$$w \leq f(\frac r2) \, \, e^{-c_0 r^{1-\beta/\alpha}} \quad \text{in $\mathcal C_{r/2}(y',g(y'))$}.$$
In particular $$\frac 12 f(r) \le w(y) \le f(\frac r2) e^{-c_0 r^{1-\beta/\alpha}},$$
and we reach a contradiction.

\

\textit{Step 3.} We prove the theorem using the Steps 1 and 2 above. After multiplication by a constant we may assume that $u=v=1$ at $\frac 12 e_n$. It suffices to show that for a large constant $C_3>0$ universal, 
$$w:=C_3 u - C_3^{-1} v \geq 0 \quad \text{in $\mathcal C_{1/2}.$}$$
By Step 2 we know that $v \leq C_3$ hence $w \ge -1$ in $\mathcal C_{3/4}.$ Moreover, since $u(e_n/2)=1$, we conclude by interior Harnack for $u$ that
$$w \geq f(r_0)\quad \text{in} \quad \mathcal C_{3/4} \cap \{x_n \ge r^\beta_0\},$$ provided that $C_3$ is chosen sufficiently large. Here $f$, $r_0$ and $\beta$ are as in Step 1.

We conclude by Step 1 that $w \geq 0$ on the line $\{t e_n, 0<t<3/4\}$. We can repeat the argument at all points on $\Gamma \cap \overline {\mathcal C}_{1/2}$, and the theorem is proved.
\qed

\section{Some extensions}

In this section we state a few variants of the Theorem \ref{T1}. First we remark that the proof applies to operators involving lower order 
terms.

\begin{thm}
The statement of Theorem $\ref{T1}$ holds for general uniformly elliptic operators
$$ Lu=div ( A(x) \nabla u ) + b(x) \cdot \nabla u + d(x) u, \quad \quad b \in L^q, \, d \in L^{q/2}, \quad q>n,$$
with the constant $C$ depending also on $q$, $\|b\|_{L^q}$, $\|d\|_{L^{q/2}}$.
\end{thm}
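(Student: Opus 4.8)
The plan is to revisit each of the three steps in the proof of Theorem \ref{T1} and check that the presence of the lower order terms $b\cdot\nabla u + d u$, with $b\in L^q$, $d\in L^{q/2}$, $q>n$, does not destroy any of the ingredients used. The key observation is that the whole argument rests on three black boxes: (i) the two capacity lemmas of Section 2, (ii) the interior Harnack inequality applied in chains of balls, and (iii) the weak Harnack / growth estimates used to propagate smallness of $w^-$. All three are available for divergence operators with lower order terms in these Lebesgue classes — this is classical De Giorgi–Nash–Moser theory in the form due to Stampacchia and Trudinger — so the strategy is to indicate why Section 2 goes through verbatim and then point to the two places in Section 3 where a genuinely new estimate is needed.

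First I would re-examine Lemmas \ref{l1} and \ref{l2}. The proof of Lemma \ref{l1} compares $2v$ with $\psi-\phi$ via the maximum principle, uses the weak Harnack inequality for the capacitary potential $\psi$, and uses a Caccioppoli/Poincar\'e estimate to bound $\int\phi^2$ by the capacity. With lower order terms the maximum principle still holds (the zeroth order coefficient can be absorbed, or one works with the operator $L - d^- $ and notes $d^-\in L^{q/2}$ still gives a valid comparison principle after the standard sign considerations), the weak Harnack inequality is still valid, and the energy comparison between $\phi$ and the harmonic $w$ picks up only lower order terms that are controlled by $\|\phi\|_{L^2}$ times a small power of the capacity via H\"older and Sobolev embedding, so the conclusion $\int\phi^2\le C\delta$ survives after possibly shrinking $\delta$. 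Lemma \ref{l2} is similar: the Caccioppoli inequality for $L$ with lower order terms in $L^q,L^{q/2}$, $q>n$, is standard and the contradiction argument is unchanged. I would state that these modifications are routine and give one line on each.

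The two substantive points are in Section 3. The interior Harnack inequality used to produce \eqref{w1} and \eqref{int} must be the Harnack inequality for $L$ with lower order terms; the only cost is that the constant per ball now also depends on $q,\|b\|_{L^q},\|d\|_{L^{q/2}}$, but crucially it is still a \emph{fixed} constant on balls of a fixed size, because the relevant scale-invariant norms $r^{1-n/q}\|b\|_{L^q(B_r)}$ and $r^{2-2n/q}\|d\|_{L^{q/2}(B_r)}$ are bounded (indeed small for small $r$). Hence the count of $Cr^\gamma$ balls and the resulting exponential $e^{-C_1 r^\gamma}$ are unaffected, and the choice of $C_0$ large still works. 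Likewise, the extension of $u$ by $0$ below $\Gamma$ is still a subsolution of $L$ — this needs $d\le 0$ only in the weak distributional sense near $\Gamma$, which one again handles by the standard device of noting $(u)^+$ restricted appropriately is a subsolution of $\mathrm{div}(A\nabla\cdot)+b\cdot\nabla$ up to a term controlled by $\|d^-\|_{L^{q/2}}u$, which can be absorbed on small cylinders. The same remark applies to the supersolution property of the capacitary potential in \eqref{Lu}.

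The main obstacle I anticipate is bookkeeping the zeroth order term $d$ in the maximum principle arguments, since a naive maximum principle fails when $d$ is not sign-definite; the clean fix is to work on cylinders $\mathcal C_r$ with $r$ small enough that the operator norm of the zeroth order perturbation is a small fraction of the ellipticity constant (using $q/2>n/2$, so $r^{2-2n/q}\|d\|_{L^{q/2}(\mathcal C_r)}\to 0$), which restores a comparison principle on that scale — exactly the regime in which all three steps already operate, since every estimate there is carried out for $r\le r_0$ universal. With that reduction in place, every inequality in Steps 1--3 is reproduced line by line with constants now also depending on $q,\|b\|_{L^q},\|d\|_{L^{q/2}}$, and the theorem follows. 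I would close by remarking that since the statement is local, one may absorb the lower order terms on a fixed small scale and then cover $\mathcal C_{1/2}$ by finitely many such pieces, so no loss occurs in the final constant $C$.
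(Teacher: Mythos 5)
Your proposal is correct and follows essentially the same route as the paper: the paper's own (very terse) remark is precisely that the lemmas of Section~2 are only ever invoked on cubes of size $r^{\beta/\alpha}$, and after dilation the condition $q>n$ makes the rescaled $L^q$ and $L^{q/2}$ norms of $b$ and $d$ small, so the energy estimates and comparison arguments go through unchanged. You spell out the scale-invariance bookkeeping and the restored maximum principle in more detail than the paper does, but the key observation — pass to a small scale where the lower-order terms are a small perturbation — is the same.
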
 

Indeed, we only need to check that the statements of Section 2 continue to hold in the small cubes of size $r^{\beta/\alpha}$. 
After a dilation this corresponds to proving Lemmas \ref{l1} and \ref{l2} for operators $L$ as above with $\|b\|_{L^q}$, $\|d\|_{L^{q/2}}$ sufficiently small. 
The proofs are identical since the presence of such lower order terms does not affect the energy estimates.

A counterexample of Bass and Burdzy in \cite{BB2} shows that Theorem \ref{T1} does not hold in general for nondivergence equations when $\alpha < \frac 12$. Here we remark that Theorem \ref{T1} remains valid with $\alpha>0$ for nondivergence linear operators which are translation invariant 
in the vertical direction.

\begin{thm}
The statement of Theorem $\ref{T1}$ holds for linear nondivergence uniformly elliptic operators of the form
$$ Lu= tr \, \, A(x') D^2 u.$$
\end{thm}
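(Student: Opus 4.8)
The plan is to reduce the nondivergence statement to the divergence machinery of Sections 2 and 3 by exploiting the vertical translation invariance of $L = \operatorname{tr} A(x') D^2 u$. The key observation is that if $u$ solves $\operatorname{tr} A(x') D^2 u = 0$ in $\mathcal C_1$, then the vertical derivative $u_{x_n}$ formally solves the same equation, since the coefficients do not depend on $x_n$; more importantly, one can pass to a divergence-form equation in a $1$-codimensional sense by integrating in $x_n$. Concretely, the strategy is to verify that the three ingredients actually used in Section 3 — interior Harnack inequality, the measure-theoretic Harnack estimate of Lemma \ref{l1}, and the weak Harnack / oscillation-decay estimate of Lemma \ref{l2} — all remain valid for solutions of $\operatorname{tr} A(x') D^2 u = 0$ in the small cubes of size $r^{\beta/\alpha}$ that appear in the iteration. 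Since the entire argument of Section 3 is built only from these three tools together with the comparison principle (which holds for nondivergence operators by the Alexandrov–Bakelman–Pucci estimate), once Lemmas \ref{l1} and \ref{l2} are established in the nondivergence setting the proof of Theorem \ref{T1} goes through verbatim.

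First I would record that interior Harnack for $\operatorname{tr} A(x') D^2 u$ is the classical Krylov–Safonov inequality, so the chaining arguments in Steps 1–3 that produce the exponential factors $e^{C_1 r^\gamma}$ are unaffected. Next, and this is the substantive point, I would re-prove Lemma \ref{l1} and Lemma \ref{l2} for $L = \operatorname{tr} A(x') D^2$. Here is where vertical invariance is essential: the natural replacement for the Dirichlet energy is obtained by writing $\operatorname{tr} A(x') D^2 u = \operatorname{div}(A(x')\nabla u)$, which is legitimate precisely because $A$ is independent of $x_n$ and hence $\sum_{i<n}\partial_i(a_{in}\partial_n u) = \sum_{i<n} a_{in}\partial_i\partial_n u$ and symmetrically — so for this special class the nondivergence and divergence operators literally coincide. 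Wait: more carefully, $\operatorname{div}(A(x')\nabla u) = \sum_{i,j} \partial_i(a_{ij}(x')\partial_j u) = \sum_{i,j} a_{ij}\partial_i\partial_j u + \sum_{i<n,j}(\partial_i a_{ij})\partial_j u$, so the two operators differ only by a bounded first-order term with $L^\infty$ coefficients. By the first Theorem of Section 4, which already handles operators $\operatorname{div}(A\nabla u) + b\cdot\nabla u + d\,u$ with $b \in L^q$, $q > n$, in particular $b \in L^\infty$, Lemmas \ref{l1} and \ref{l2} hold for $\operatorname{div}(A(x')\nabla u) + (\text{drift})$, and after undoing the dilation the drift term is small in the requisite norm on cubes of size $r^{\beta/\alpha}$. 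Thus the lemmas transfer to $\operatorname{tr} A(x')D^2$ at no extra cost.

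With Lemmas \ref{l1} and \ref{l2} in hand for $L = \operatorname{tr} A(x')D^2$, I would then run the three-step iteration of Section 3 unchanged: Step 1 (propagation of positivity along the $e_n$-axis via the dichotomy "positive density or positive capacity of $\{w^- = 0\}$ in each stacked cube", followed by weak Harnack), Step 2 (the Carleson estimate $u, v \le C_2$ in $\mathcal C_{1/2}$), and Step 3 (applying Step 1 to $w = C_3 u - C_3^{-1} v$). The only place requiring a word of care is the use of the maximum principle in the proof of Lemma \ref{l1} and in Step 3: for nondivergence operators the comparison of $2v$ with $\psi - \phi$, and of $w$ with its barriers, must be justified by the ABP/maximum principle for $L u = \operatorname{tr} A D^2 u$ rather than by the $H^1$ comparison principle — this is standard but should be stated. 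I expect the main obstacle, such as it is, to be purely bookkeeping: confirming that the $x_n$-independence really does reduce $\operatorname{tr} A(x')D^2$ to a divergence operator plus a bounded drift, and that all the constructions in Section 2 (the solutions $\psi$, $\phi$, $w$ to the auxiliary Dirichlet problems, and the Caccioppoli and Poincaré estimates) make sense and remain quantitatively controlled in this slightly enlarged class. Since Section 4 already grants us exactly that enlarged class, there is no genuine analytic difficulty, and the theorem follows.
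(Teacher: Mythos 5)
The reduction you propose does not work, and it misses what the theorem is actually for. Writing
$$\operatorname{div}\bigl(A(x')\nabla u\bigr) = \operatorname{tr} A(x')D^2 u + \sum_{i,j}\bigl(\partial_i a_{ij}\bigr)\partial_j u$$
requires the entries $a_{ij}$ to be differentiable with derivatives in $L^q$, $q>n$. But the hypothesis on $A(x')$ here is only continuity (and in the general nondivergence framework one wants even less). If $A$ were Lipschitz in $x'$, then $\operatorname{tr} A(x')D^2$ would be a divergence operator with bounded drift and the statement would already be contained in the preceding theorem for $\operatorname{div}(A\nabla u)+b\cdot\nabla u+du$; the reason this theorem is stated separately is exactly to handle coefficients without any regularity that would allow such a rewriting. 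So the ``bounded first-order term with $L^\infty$ coefficients'' in your argument simply does not exist, and the whole reduction collapses.

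The paper's approach is genuinely different and avoids the Dirichlet energy altogether. One redefines the capacity in a way adapted to the nondivergence operator: $cap_{3/4}(E) := \inf_{Q_{1/4}}\phi$, where $\phi$ is the $L$-solution of the obstacle-type Dirichlet problem with $\phi=1$ on $E\cap Q_{3/4}$ and $\phi=0$ on $\partial Q_1$. With this definition Lemma \ref{l2} is immediate (take $c(\delta)=\delta$), and for Lemma \ref{l1} the bound \eqref{A2} follows from the weak Harnack inequality applied to $\phi$ rather than from Poincar\'e/Caccioppoli. You also misidentify the role of the $x_n$-independence of $A$: it is not needed to reduce to divergence form, but to guarantee that this new capacity of the complement $E=\{x_n\le g(x')\}$ in the stacked cubes $Q_{r^{\beta/\alpha}}(te_n)$ is monotone in $t$ — in the divergence case this monotonicity came for free from the graph property and the energy definition, but for the nondivergence capacity one needs the operator to be translation-invariant in $e_n$ as well. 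Without that monotonicity the choice of $t_0$ separating the ``Lemma \ref{l1} regime'' from the ``Lemma \ref{l2} regime'' in Step 1 breaks down. Finally, the boundary condition in the nondivergence setting must be reinterpreted (the zero extension of $u$ below $\Gamma$ is a bounded subsolution), a point your write-up does not address.
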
 

In this theorem we assume for simplicity that the coefficient matrix $A$ depends continuously on its argument. Since $u$, $v$ might not be continuous at all points on $\Gamma$, the hypothesis that $u$, $v$ vanish on the boundary is understood in the sense 
that their 
extensions with $0$ below $\Gamma$ are bounded subsolutions for $L$, see \cite{DS}.

 In this case we provide the corresponding lemmas of Section 2 by defining the capacity (with respect to $L$) as
 $$cap_{3/4}(E)=\inf_{Q_{1/4}} \phi,$$
 where $\phi$ solves
 $$\mbox{$L \phi=0$ in $Q_1 \setminus (E \cap Q_{3/4}) $, $\phi =1$ in $E \cap Q_{3/4}$, $\phi=0$ on $\p Q_1$.}$$
  Then Lemma \ref{l2} follows directly from the definition of the capacity, with $c(\delta) = \delta$. 
  For Lemma \ref{l1} we see that \eqref{A2} is satisfied since by the Weak Harnack inequality 
  the set $\{\phi>c_0\}$ must have small measure in $Q_1$ if $\delta$ is sufficiently small. The rest of the proof is the same.

The arguments of Section 3 can be repeated in the same way. The invariance of the operator $L$ with respect to the vertical direction and the 
graph property of the boundary imply that the capacity of the complement $E$ in the cubes $Q_{r^{\beta / \alpha}}(t e_n)$ is monotone in $t$. 
We can apply again Lemma \ref{l1} for the top cubes with $t \ge t_0$ and Lemma \ref{l2} for the bottom cubes with $t \le t_0$, and carry on as before.

\

We also discuss the case of fully nonlinear operators
\begin{equation}\label{F}
F(D^2u) =0 \quad \mbox{in} \quad \mathcal C_1,
\end{equation}
with $F$ uniformly elliptic with constants $\lambda, \Lambda$, and homogenous of degree 1. 

We can prove the lemmas of Section 2 for the operator $F$ by using as capacity the definition 
above with $L\phi=F(-D^2\phi)$. Then Lemma \ref{l2} follows again directly from the definition. 
For the proof of Lemma \ref{l1} we choose the function $\psi$ to satisfy $\mathcal M^-_{\lambda/n,\Lambda}(\psi) =0$. Here as usual, $\mathcal M^-$ denotes the extremal Pucci operator, $$\mathcal M^-_{\lambda, \Lambda}(M)= \inf_{A} tr(AM),$$ with $A$ a symmetric matrix whose eigenvalues belong to $[\lambda, \Lambda].$

Then $\psi-\phi$ is a subsolution
  $$ F(D^2(\psi-\phi)) \ge F(D^2(-\phi)) + \mathcal M^-_{\lambda/n,\Lambda}(\psi)  \ge 0,$$
  and the rest of proof remains as before. However in the proof of the main Theorem \ref{T1} only Step 2, the Carleson estimate, can be carried out in this setting,
 since for Step 1 we need the lemmas of Section 2 to hold not only for solutions of the operator $F$ but for the difference of two such solutions as well. 

  \begin{thm}[Carleson estimate]
  Assume that $u \ge 0$ satisfies \eqref{F} and $u$ vanishes on $\Gamma$. Then
  $$ u \le C u(\frac 12 e_n) \quad \mbox{in} \quad \mathcal C_{1/2},$$
  with $C$ depending on $n$, $\alpha$, $\|g\|_{C^\alpha}, \lambda$ and $\Lambda$. 
  \end{thm}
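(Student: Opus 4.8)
The plan is to mimic Step 2 of the proof of Theorem \ref{T1}, since the Carleson estimate is precisely the assertion that was already isolated as the part of that argument which does not require taking differences of solutions. After multiplying by a constant we may assume $u(\frac12 e_n)=1$, and the goal becomes the bound $u\le C$ in $\mathcal C_{1/2}$ with $C$ universal (depending also on $\alpha$ and $\|g\|_{C^\alpha}$). As in \eqref{int}, the interior Harnack inequality for the $F$-solution $u$ along a chain of balls gives $u\le e^{C_1 h_\Gamma^{1-1/\alpha}}$ in $\mathcal C_{3/4}$, where $h_\Gamma(x)=x_n-g(x')$; the key point is that because the extension of $u$ by $0$ below $\Gamma$ is a bounded subsolution for $F$, $u$ is a priori bounded, and the whole scheme is an argument by contradiction against unboundedness near $\Gamma$.

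The core step is the following dichotomy: fix $\beta>1$ and $\gamma=\beta(1-\frac1\alpha)<0$ as in Step 1, and show that there is a universal $r_0$ so that if $r\le r_0$ and $u(y)\ge f(r)=e^{C_0 r^\gamma}$ for some $y\in\mathcal C_{1/2}$, then one can find $z$ with $|y'-z'|=r$ and $0<h_\Gamma(z)<r^\beta$ such that $u(z)\ge f(r/2)$. Granting this, since $|z-y|\le Cr^\alpha\to0$, one iterates to produce a convergent sequence $y_k$ with $u(y_k)\ge f(2^{-k}r)\to\infty$, contradicting the a priori bound; hence $u\le f(r_0)$ is universally bounded in $\mathcal C_{1/2}$. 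To prove the dichotomy we set $w:=(u-\frac12 e^{C_0 r^\gamma})^+$ with $C_0\gg C_1$, so that $w=0$ whenever $h_\Gamma(x)\ge r^\beta$ by the interior estimate. Then, exactly as in Step 1 and Step 2, we look at the stack of cubes $Q_{r^{\beta/\alpha}}(te_n')$ (recentered at $(y',g(y'))$) connecting the interior of the domain to its complement: the graph property makes the capacity of $E=\{x_n\le g(x')\}$ monotone in $t$, so there is a critical level $t_0$, and in each cube $\{w=0\}$ has either positive density (for $t\ge t_0$, via the capacity upper bound together with Lemma \ref{l1} applied to the $F$-operator in its rescaled form) or positive capacity (for $t\le t_0$, via Lemma \ref{l2}). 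If the desired point $z$ did not exist, applying the weak Harnack inequality for $w$ repeatedly as we march the $r^{\beta/\alpha}$-cubes inward from the sides of $\mathcal C_r(y',g(y'))$ yields $w\le f(r/2)\,e^{-c_0 r^{1-\beta/\alpha}}$ throughout $\mathcal C_{r/2}(y',g(y'))$, whence $\frac12 f(r)\le w(y)\le f(r/2)e^{-c_0 r^{1-\beta/\alpha}}$, which is false for $r$ small since $f(r)/f(r/2)=e^{C_0 r^\gamma(1-2^{-\gamma})}$ decays much slower than $e^{-c_0 r^{1-\beta/\alpha}}$. Finally, repeating the argument centered at every point of $\Gamma\cap\overline{\mathcal C}_{1/2}$ and combining with the interior Harnack bound on the bulk region $\{h_\Gamma\ge cr_0^\beta\}$ gives $u\le C$ everywhere in $\mathcal C_{1/2}$.

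The main obstacle — and the only place where the fully nonlinear setting genuinely differs from the divergence case — is verifying that Lemmas \ref{l1} and \ref{l2} hold for $F$ in the small rescaled cubes. This is handled exactly as indicated in the preceding discussion: one uses the capacity $cap_{3/4}(E)=\inf_{Q_{1/4}}\phi$ defined via $L\phi=F(-D^2\phi)=0$ with the appropriate boundary data, so that Lemma \ref{l2} is immediate from the definition; for Lemma \ref{l1} one takes the comparison function $\psi$ solving $\mathcal M^-_{\lambda/n,\Lambda}(\psi)=0$, checks via the Pucci inequality that $F(D^2(\psi-\phi))\ge F(D^2(-\phi))+\mathcal M^-_{\lambda/n,\Lambda}(\psi)\ge0$ so that $\psi-\phi$ is a subsolution below $u$ up to the constant $1/2$, and then the weak Harnack inequality forces $\{\phi>c_0\}$ to have small measure when $\delta$ is small, which is \eqref{A2}. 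Crucially, throughout Step 2 we never need the lemmas for differences $\psi-\phi$ of $F$-solutions — which is exactly why the full boundary Harnack principle fails to go through here but the Carleson estimate does. The remaining ingredients (interior Harnack for $F$, subsolution property of the zero extension, the capacity monotonicity from the graph property) are all available verbatim, so the proof is a faithful transcription of Step 2 with the divergence-form lemmas replaced by their fully nonlinear counterparts.
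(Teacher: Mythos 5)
Your proposal is correct and follows essentially the same route as the paper: it carries out Step 2 of the proof of Theorem 1 verbatim, with the Section 2 lemmas replaced by their fully nonlinear counterparts built from the operator-adapted capacity $cap_{3/4}(E)=\inf_{Q_{1/4}}\phi$, the Pucci comparison $F(D^2(\psi-\phi))\ge F(D^2(-\phi))+\mathcal M^-_{\lambda/n,\Lambda}(\psi)\ge 0$, and the key observation that Step 2 never requires the lemmas for differences of $F$-solutions (which is precisely why the full boundary Harnack argument fails here but the Carleson estimate survives). The paper's own treatment of this theorem consists of exactly this remark, stated more tersely.
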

  
  Finally we mention that in $\R^2$ Theorem \ref{T1} holds under very mild assumptions on the domain and the operator. Here we state a version for $L^\infty$ graphs and linear operators. 
  
  \begin{thm}
 Assume $\Gamma \subset \R^2$ is the closure of the graph of a function $g$ with $\|g\|_{L^\infty} \le 1/4$. Then the statement of Theorem \ref{T1} 
 holds for uniformly elliptic linear operators $L$ in divergence or nondivergence form with constant $C$ depending only on the ellipticity constants of $L$.
  \end{thm}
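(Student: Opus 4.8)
The plan is to follow the three-step scheme of Section 3, dispensing with the H\"older hypothesis on $g$ in favor of a planar capacity estimate. In the proof of Theorem \ref{T1} the exponent $\alpha$ entered only through the need, at each scale $r$, for a stack of $\sim r^{\gamma}$ cubes of size $r^{\beta/\alpha}$ interpolating between the domain and its complement, along which $cap_{3/4}$ of the complement is monotone. In $\R^2$ both ingredients are available at a single \emph{fixed} scale $\rho$. Consequently the function $f$ of Step 1 becomes a universal constant (there is no exponential factor $e^{C_0 r^\gamma}$), the interior Harnack chains joining consecutive levels have $O(\rho^{-1})$ balls, and the loss at each step is by a fixed universal factor.

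First I would record what $\|g\|_{L^\infty}\le\tfrac14$ buys: the domain $\mathcal C_1$ contains a fixed clean rectangle $\mathcal S$ (for instance $B_{3/4}'\times(\tfrac38,\tfrac58)$) lying at positive distance from $\Gamma$ and containing $\tfrac12 e_2$, on which $u$, $v$ satisfy the interior Harnack inequality through $O(1)$ balls, while the complement $E=\{x_2\le g(x_1)\}$ and $\Gamma$ itself lie in $\{x_2\le\tfrac14\}$. The key planar fact is that a segment crossing a two-dimensional cube has capacity bounded below by a universal constant; in $\R^n$, $n\ge3$, such a segment is capacity-null and one would need a full $(n-1)$-dimensional family of them, which is precisely what a H\"older graph with small exponent fails to supply at the relevant scale. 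Fix a point of $\Gamma$ and consider the cubes $Q_\rho(te_2)$ of the fixed size $\rho$ centered on the vertical line through it. Since $E$ is downward closed, $(E+se_2)\cap Q\supseteq E\cap Q$ for $s\ge0$, so $cap_{3/4}(E)$ in the unit rescaling of $Q_\rho(te_2)$ is monotone nonincreasing in $t$; it vanishes once the cube lies above the supremum of $g$ over its horizontal window, and it is bounded below by a universal constant as soon as the cube lies below that supremum, since then $E$ contains a full vertical sliver of the cube. Hence the transition occurs within a bounded number of consecutive cubes, and with $\delta$ the small universal constant of Lemma \ref{l1} the level $t_0:=\inf\{t:\ cap_{3/4}(E)<\delta\}$ is well defined, with $cap_{3/4}(E)<\delta$ above $t_0$ and $cap_{3/4}(E)\ge\delta$ below. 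For a general divergence operator one argues with the Laplacian capacity as a comparable proxy, exactly as in the proof of Lemma \ref{l1}; for nondivergence operators one uses the $L$-capacity $\inf_{Q_{1/4}}\phi$ of Section 4, for which the same two properties hold by the maximum principle, and the vertical translation invariance needed in Section 4 is not required in two dimensions.

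With $t_0$ in hand I would rerun Steps 1--3 at the fixed scale $\rho$, applying Lemma \ref{l1} to the cubes above $t_0$ and Lemma \ref{l2} (for $w^-$, which vanishes on a set of capacity $\ge\delta$) to those below. Exactly as in Step 1, a positivity-in-measure lower bound for $w:=C_3u-C_3^{-1}v$ propagates from the clean rectangle $\mathcal S$ down toward $\Gamma$, and then $\sup w^-$ decreases geometrically as one moves horizontally inward from the lateral sides of the relevant cylinder; since all counts are $O(\rho^{-1})$, i.e. universal, a single pass yields $w\ge-(1-c')$ in $\mathcal C_{1/2}$ with $c'\in(0,1)$ universal. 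The Carleson estimate (Step 2), needed to know $v\le C_2$ in $\mathcal C_{1/2}$, is obtained by the same descent applied to a suitable positive part of $u$. To pass from $w\ge-(1-c')$ to $w\ge0$ one iterates around a fixed base point $(p_1,g(p_1))\in\Gamma$: each pass improves the bound on $w^-$ by the factor $1-c'$ on a cylinder half as large; for a fixed target point $P$, either $P$ stays in the shrinking cylinders, so that $w^-(P)\to0$, or $P$ leaves them once the scale reaches $\sim\mathrm{dist}(P,\Gamma)$, at which point $P$ sits at unit distance from $\Gamma$ relative to that scale and interior Harnack from $\mathcal S$ gives $w(P)>0$ directly. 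Repeating at every base point of $\Gamma$ covers $\mathcal C_{1/2}$, as in Step 3.

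I expect the main obstacle to be the planar capacity dichotomy in the generality required: because $g$ is merely bounded, the part of $E$ inside a transition cube can be geometrically wild, and one must check that the monotone capacity genuinely separates the Lemma \ref{l1} regime (cube above a neighborhood of $\Gamma$) from the Lemma \ref{l2} regime (cube containing a definite crossing segment of $E$), and likewise for the $L$-capacity in the nondivergence case. Once this is secured, the remaining points --- the existence of the clean rectangle, the bounded Harnack chains, and the closing of the iteration to $w\ge0$ --- are routine and mirror Section 3 verbatim.
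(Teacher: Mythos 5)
Your proposal takes a genuinely different route from the paper, and unfortunately the route has a gap that the paper's approach is specifically designed to avoid.

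\medskip

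\textbf{What the paper actually does.} The paper's proof of the two-dimensional theorem abandons the iterative capacity scheme of Section 3 altogether; the paper explicitly remarks that the 2D arguments ``are based on topological considerations and do not require an iterative argument.'' For Step~1, one assumes by contradiction that a connected component $U$ of $\{w<0\}$ meets $\mathcal C_{1/2}$. Since $w$ is bounded below and large at $\tfrac12 e_2$ (and on a fixed clean rectangle far from $\Gamma$), $U$ must stay near $\Gamma$ and must cross the slab $\{\tfrac12\le x_1\le\tfrac34\}$. A polygonal arc $\ell\subset U$ joining the two lateral sides splits the fixed rectangle $D$ into two pieces; replacing $w$ by $\tilde w:=\min\{w,0\}$ below $\ell$ yields a supersolution in $D$ which is large in a ball above $\ell$ and $\ge -1$ everywhere, forcing $\tilde w\ge 0$ on a vertical segment that crosses $\ell$ --- a contradiction. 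Step~2 (Carleson) is proved the same way. No Harnack chain into the region near $\Gamma$ is needed, and no capacity bookkeeping at a hierarchy of scales is required; this is a single-scale argument exploiting the Jordan-curve structure of the plane.

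\medskip

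\textbf{Where your proposal breaks.} You propose to rerun Steps 1--3 of Section~3 at a fixed scale $\rho$ and then iterate on shrinking cylinders $\mathcal C_{2^{-k}}$ around each base point of $\Gamma$, claiming that ``all counts are $O(\rho^{-1})$, i.e.\ universal.'' This is the point that fails for $\|g\|_{L^\infty}\le\tfrac14$ with no modulus of continuity. At the first scale your ``clean rectangle $\mathcal S$'' does exist (because $\Gamma\subset\{|x_2|\le\tfrac14\}$), but once you descend to cylinders $\mathcal C_r(p)$ with $r<\tfrac14$ there is no clean rectangle at height $\sim r$: over a window of width $r$ the graph $g$ can still oscillate by a full $\tfrac12$, so the set $\{g(x_1)+r/2<x_2<g(x_1)+r,\ |x_1-p_1|<r\}$ is a graph-shaped sliver, not a rectangle at fixed distance from $\Gamma$, and in general contains no ball of radius $\sim r$. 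Correspondingly, the interior Harnack chain from $\tfrac12 e_2$ to a point at height $r$ above an $L^\infty$ graph has no a priori length bound; the ball radii in the chain can be forced arbitrarily small by nearby jumps of $g$. In the H\"older case the paper controls exactly this by the carefully tuned exponents $\beta$, $\gamma=\beta(1-1/\alpha)$ and the functions $f(r)=e^{C_0 r^\gamma}$, $a(r)=e^{-c_0 r^{1-\beta/\alpha}}$, and the balance $f(r/2)a(r)\le 1$ works only because $\beta>1$ --- none of this has an $L^\infty$ analogue. Your observation that a vertical segment crossing a planar cube has capacity bounded below by a universal constant is correct and is a genuinely two-dimensional fact, but it only controls the \emph{transition from Lemma~\ref{l1} to Lemma~\ref{l2}}; it does not restore the Harnack chains or the positivity of $w$ in a small clean rectangle at small scales, and without those the iteration you describe cannot close.

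\medskip

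\textbf{Summary.} Your proposal replaces the paper's planar topological argument with an attempt to rescue the Section~3 iteration via planar capacity; the capacity observation is sound, but the ``clean rectangle at every scale'' and ``universal Harnack chain length'' assumptions are false for $L^\infty$ graphs, so the iteration to smaller scales does not go through. The correct mechanism in two dimensions is the cut-along-a-polygonal-arc maximum-principle argument, which works at a single scale and has no analogue in $\R^n$, $n\ge 3$.
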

  
  We only sketch Steps 1 and 2 of Section 3 in this setting which can be adapted to more general situations. They are based on topological considerations and do not require an iterative argument.  
  
  \
  
 {\it Step 1.} If $Lw=0$ and $w \ge -1$ in $\mathcal C_1$, and $w$ vanishes continuously on $\Gamma$, then $w>0$ in $\mathcal C_{1/2}$ provided 
 that $u (\frac 12 e_2)$ is large. 
 
 \begin{figure}[h]
\includegraphics[width=0.6 \textwidth]{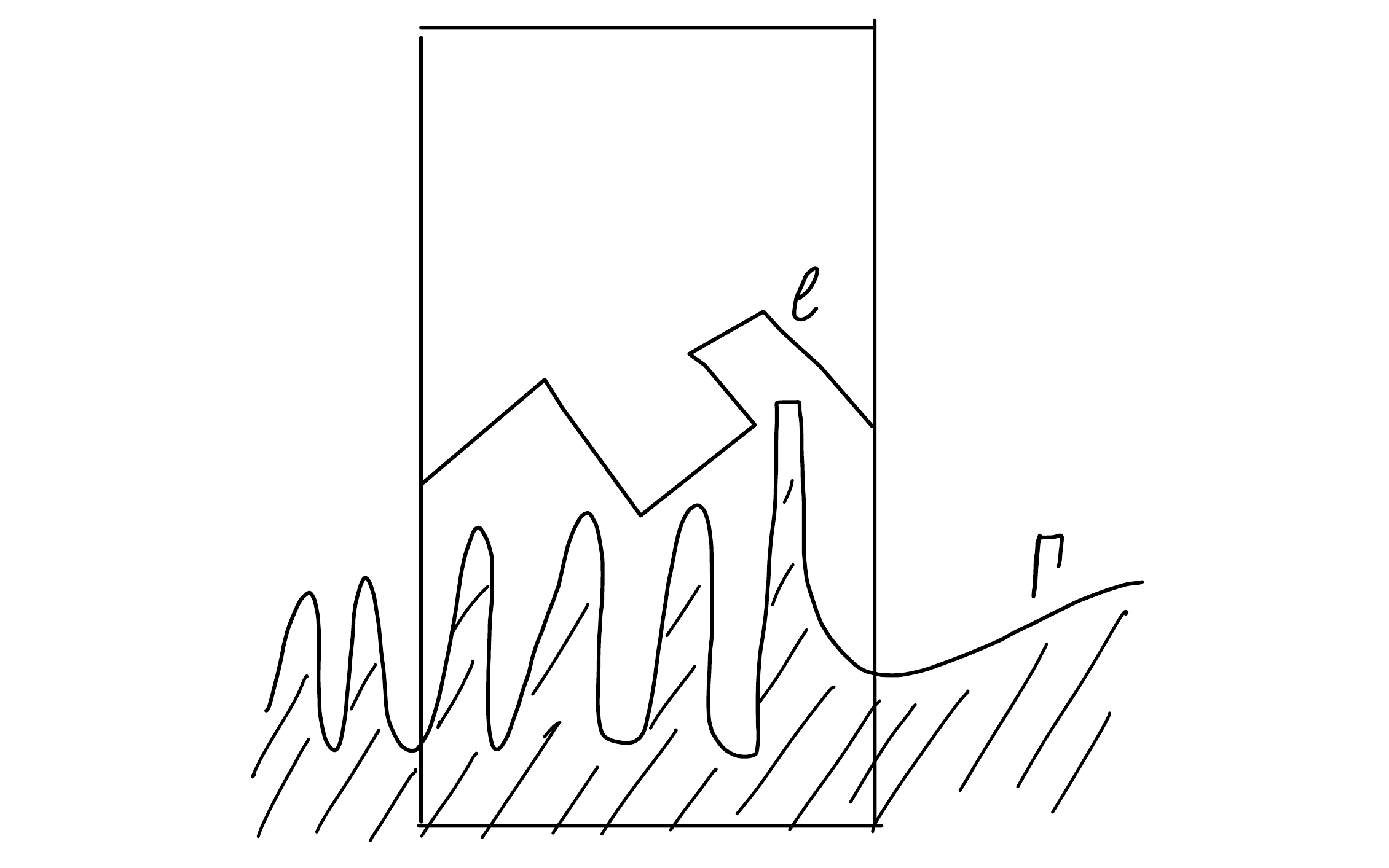}
\caption{}
    \label{fig3}
\end{figure}

To prove this, we assume by contradiction that there is a connected component $U$ of $\{w<0\}$ which intersects $\mathcal C_{1/2}$. 
This connected component must exit $\mathcal C_1$ and since $u(\frac 12 e_2) \gg1$, $U$ must stay close to $\Gamma$. 
Thus we can find a nonintersecting polygonal line $\ell$, say included in $$l \, \subset \, \, \left\{\frac 12 \le x_1 \le \frac 34\right\} \cap U,$$ which connects the two lateral 
sides $x_1=\frac 12$ and $x_1= \frac 34$ (see Figure \ref{fig3}). The line $\ell$ splits the cylinder $$D:=(\frac 12, \frac 34) \times (\frac 12, \frac 12)$$ into two disjoint sets, and we define $\tilde w$ to be equal to $w$ on the set ``above" $\ell$ and $\tilde w=\min \{ w, 0\}$ on the set ``below'' $\ell$. Then $\tilde w$ is a supersolution of $L$ in $D$. Since $\tilde w$ is sufficiently large in a ball above $\ell$, and $\tilde w \ge -1$ in $D$ we find that $\tilde w \ge 0$ on the segment 
$$ \{x_1=\frac 5 8\} \cap \{|x_2| \le \frac 38\}.$$
We reached a contradiction at the point where $\ell$ intersects this segment.

\

{\it Step 2.} Assume $Lu=0$ and $u \ge 0$ in $\mathcal C_1$, and $u$ vanishes continuously on $\Gamma$, with $u(\frac 12 e_2)=1$. 
Then $u \le C$ in $\mathcal C_{1/2}$, for some large $C$. 

\

This follows similarly as in Step 1. If $\{u>C\}$ has a connected component that intersects $\mathcal C_{1/2},$ then we can find a polygonal line $\ell$ 
as above where $u$ is large. Thus $\min\{u,C\}$ extended by $C$ below $\ell$ is a supersolution for $L$ in $D$, and the maximum principle implies that $u$ 
 is large at the point $(5/8,1/2)$. Therefore by Harnack inequality $u(0,1/2)$ is large as well, and we reach a contradiction.

\end{document}